\title{\sc Maxwell meets Korn:\\ 
A New Coercive Inequality\\
for Tensor Fields in $\mathbb{R}^{N\times N}$
with Square-Integrable Exterior Derivative}
\def\shorttitle{Maxwell meets Korn}
\def\pauthor{Patrizio Neff, Dirk Pauly, Karl-Josef Witsch}
\def\mylabelonoff{off}
\def\allowdisbrk{no}
\author{{\sf\pauthor}}
\markboth{\pauthor}{\shorttitle}
\numberwithin{equation}{section}
\newenvironment{acknow}{{\vspace*{1cm}\noindent\bf Acknowledgements }}{}
\newcommand{\bewboxw}{\mbox{}\hfill $\square$ \\}
\newenvironment{proof}{{\noindent\bf Proof }}{\bewboxw}
\newcommand{\keywords}[1]{{\noindent\bf Key Words }#1}
\newcommand{\ol}{\overline}
\newcommand{\nz}{\mathbb{N}}
\newcommand{\rz}{\mathbb{R}}
\newcommand{\rN}{\rz^N}
\DeclareMathOperator{\p}{\partial}
\newcommand{\na}{\nabla}
\DeclareMathOperator{\ed}{d}
\DeclareMathOperator{\cd}{\delta}
\DeclareMathOperator{\grad}{grad}
\DeclareMathOperator{\Grad}{Grad}
\DeclareMathOperator{\curl}{curl}
\DeclareMathOperator{\Curl}{Curl}
\renewcommand{\div}{\operatorname{div}}
\DeclareMathOperator{\Div}{Div}
\newcommand{\trans}[1]{{#1}^t}
\newcommand{\T}{T}
\newcommand{\TS}{S}
\newcommand{\om}{\Omega}
\newcommand{\dom}{\p\!\om}
\newcommand{\ga}{\Gamma}
\newcommand{\ttrga}{\tau_{\ga}}
\DeclareMathOperator{\sym}{sym}
\renewcommand{\skew}{\operatorname{skew}}
\newcommand{\dvec}[3]{\begin{bmatrix}#1\\#2\\#3\end{bmatrix}}
\def\set#1#2{\{#1\,:\,#2\}}
\DeclareMathOperator{\Lebesgue}{\mathsf{L}}
\newcommand{\Lgen}[2]{\Lebesgue^{#1}_{#2}}
\def\Lt{\Lgen{2}{}}
\def\Ltom{\Lt(\om)}
\newcommand{\qLt}[1]{\Lgen{2,#1}{}}
\newcommand{\Ltq}{\qLt{q}}
\newcommand{\Ltqpo}{\qLt{q+1}}
\newcommand{\Ltqmo}{\qLt{q-1}}
\newcommand{\Ltqom}{\Ltq(\om)}
\newcommand{\Ltqpoom}{\Ltqpo(\om)}
\newcommand{\Ltqmoom}{\Ltqmo(\om)}
\DeclareMathOperator{\DSobolev}{\mathsf{D}}
\newcommand{\Dgen}[3]{\overset{#3}{\DSobolev}{}^{#1}_{#2}}
\newcommand{\qD}[1]{\Dgen{#1}{}{}}
\newcommand{\qDc}[1]{\Dgen{#1}{}{\circ}}
\newcommand{\qDcz}[1]{\Dgen{#1}{0}{\circ}}
\newcommand{\Dq}{\qD{q}}
\newcommand{\Dqc}{\qDc{q}}
\newcommand{\Dqmoc}{\qDc{q-1}}
\newcommand{\Dqcz}{\qDcz{q}}
\newcommand{\Dqom}{\Dq(\om)}
\newcommand{\Dqcom}{\Dqc(\om)}
\newcommand{\Dqmocom}{\Dqmoc(\om)}
\newcommand{\Dqczom}{\Dqcz(\om)}
\DeclareMathOperator{\DeSobolev}{\Delta}
\newcommand{\Degen}[3]{\overset{#3}{\DeSobolev}{}^{#1}_{#2}}
\newcommand{\qDe}[1]{\Degen{#1}{}{}}
\newcommand{\qDez}[1]{\Degen{#1}{0}{}}
\newcommand{\Deq}{\qDe{q}}
\newcommand{\Deqz}{\qDez{q}}
\newcommand{\Deqmoz}{\qDez{q-1}}
\newcommand{\Deqom}{\Deq(\om)}
\newcommand{\Deqzom}{\Deqz(\om)}
\newcommand{\Deqmozom}{\Deqmoz(\om)}
\DeclareMathOperator{\Sobolev}{\mathsf{H}}
\newcommand{\Hgen}[3]{\overset{#3}{\Sobolev}{}^{#1}_{#2}}
\def\Ho{\Hgen{1}{}{}}
\def\Hoz{\Hgen{1}{}{\circ}}
\def\Hoom{\Ho(\om)}
\def\Hozom{\Hoz(\om)}
\DeclareMathOperator{\Cont}{\mathsf{C}}
\newcommand{\Cgen}[2]{\overset{#2}{\Cont}{}^{#1}}
\def\Cic{\Cgen{\infty}{\circ}}
\def\Ciqc{\Cgen{\infty,q}{\circ}}
\def\Ciqpoc{\Cgen{\infty,q+1}{\circ}}
\def\Cicom{\Cic(\om)}
\def\Ciqcom{\Ciqc(\om)}
\def\Ciqpocom{\Ciqpoc(\om)}
\DeclareMathOperator{\dirichlet}{\mathcal{H}}
\newcommand{\qharmdi}[2]{\dirichlet^{#1}_{#2}(\om)}
\newcommand{\harmdiz}{\qharmdi{0}{}}
\newcommand{\harmdio}{\qharmdi{1}{}}
\newcommand{\harmdiq}{\qharmdi{q}{}}
\newcommand{\harmdiqmo}{\qharmdi{q-1}{}}
\newcommand{\Hggen}[3]{\overset{#2}{\Sobolev}(\grad;#3)}
\newcommand{\HGgen}[3]{\overset{#2}{\Sobolev}(\Grad;#3)}
\newcommand{\Hcgen}[3]{\overset{#2}{\Sobolev}(\curl_{#1};#3)}
\newcommand{\HCgen}[3]{\overset{#2}{\Sobolev}(\Curl_{#1};#3)}
\newcommand{\Hdgen}[3]{\overset{#2}{\Sobolev}(\div_{#1};#3)}
\newcommand{\HDgen}[3]{\overset{#2}{\Sobolev}(\Div_{#1};#3)}
\newcommand{\Hgom}{\Hggen{}{}{\om}}
\newcommand{\HGom}{\HGgen{}{}{\om}}
\newcommand{\Hcom}{\Hcgen{}{}{\om}}
\newcommand{\HCom}{\HCgen{}{}{\om}}
\newcommand{\Hdom}{\Hdgen{}{}{\om}}
\newcommand{\HDom}{\HDgen{}{}{\om}}
\newcommand{\Hgcom}{\Hggen{}{\circ}{\om}}
\newcommand{\HGcom}{\HGgen{}{\circ}{\om}}
\newcommand{\Hccom}{\Hcgen{}{\circ}{\om}}
\newcommand{\HCcom}{\HCgen{}{\circ}{\om}}
\newcommand{\Hczom}{\Hcgen{0}{}{\om}}
\newcommand{\HCzom}{\HCgen{0}{}{\om}}
\newcommand{\Hdzom}{\Hdgen{0}{}{\om}}
\newcommand{\HDzom}{\HDgen{0}{}{\om}}
\newcommand{\Hcczom}{\Hcgen{0}{\circ}{\om}}
\newcommand{\HCczom}{\HCgen{0}{\circ}{\om}}
\newcommand{\normdst}{\hspace{-0.4ex}}
\newcommand{\scp}[2]{\left\langle#1,#2\right\rangle}
\newcommand{\scpLtom}[2]{\scp{#1}{#2}_{\Ltom}}
\newcommand{\scpLtqom}[2]{\scp{#1}{#2}_{\Ltqom}}
\newcommand{\scpLtqpoom}[2]{\scp{#1}{#2}_{\Ltqpoom}}
\newcommand{\norm}[1]{\left|\normdst\left|#1\right|\normdst\right|}
\newcommand{\dnorm}[1]{\left|\normdst\left|\normdst\left|#1\right|\normdst\right|\normdst\right|}
\newcommand{\normLtom}[1]{\norm{#1}_{\Ltom}}
\newcommand{\normLtqom}[1]{\norm{#1}_{\Ltqom}}
\newcommand{\normLtqpoom}[1]{\norm{#1}_{\Ltqpoom}}
\newcommand{\normLtqmoom}[1]{\norm{#1}_{\Ltqmoom}}
\newtheorem{lem}{Lemma}
\newtheorem{theo}[lem]{Theorem}
\newtheorem{cor}[lem]{Corollary}
\newtheorem{rem}[lem]{Remark}
\renewcommand{\T}{P}
\renewcommand{\trans}[1]{{#1}^T}
\newcommand{\soN}{\mathfrak{so}(N)}
\newcommand{\ped}{\ed\!}
\newcommand{\pcd}{\cd\!}
\newcommand{\pp}{\p\!}
\begin{document}

\maketitle{}

\begin{abstract}
\noindent
For a bounded domain $\om\subset\rN$
with connected Lipschitz boundary we prove
the existence of some $c>0$, such that 
$$c\norm{\T}_{\Lt(\om,\rz^{N\times N})}
\leq\norm{\sym\T}_{\Lt(\om,\rz^{N\times N})}
+\norm{\Curl\T}_{\Lt(\om,\rz^{N\times(N-1)N/2})}$$
holds for all square-integrable tensor fields $\T:\om\to\rz^{N\times N}$,
having square-integrable generalized `rotation' 
$\Curl\T:\om\to\rz^{N\times(N-1)N/2}$
and vanishing tangential trace on $\dom$,
where both operations are to be understood row-wise.
Here, in each row the operator $\curl$
is the vector analytical reincarnation
of the exterior derivative $\ped$ in $\rN$.
For compatible tensor fields $\T$, i.e., $\T=\na v$,
the latter estimate reduces to a non-standard variant
of Korn's first inequality in $\rN$, namely
$$c\norm{\na v}_{\Lt(\om,\rz^{N\times N})}
\leq\norm{\sym\na v}_{\Lt(\om,\rz^{N\times N})}$$
for all vector fields $v\in\Ho(\om,\rN)$, 
for which $\na v_{n}$, $n=1,\dots,N$, are normal at $\dom$.\\
\keywords{Korn's inequality, theory of Maxwell equations in $\rN$, 
Helmholtz decomposition, Poincar\'e/Friedrichs type estimates}
\end{abstract}


\section{Introduction and Preliminaries}

We extend the results from \cite{neffpaulywitschgenkornrt},
which have been announced in \cite{neffpaulywitschgenkornpamm}, 
to the $N$-dimensional case following
in close lines the arguments presented there.
Let $N\in\nz$ and $\om$ be a bounded domain in $\rN$
with connected Lipschitz boundary $\ga:=\dom$.
We prove a Korn-type inequality in $\HCcom$
for eventually non-symmetric tensor fields $\T$
mapping $\om$ to $\rz^{N\times N}$.
More precisely, there exists a positive constant $c$, such that 
$$c\normLtom{\T}\leq\normLtom{\sym\T}+\normLtom{\Curl\T}$$
holds for all tensor fields $\T\in\HCcom$,
where $\T$ belongs to $\HCcom$, if
$\T\in\HCom$ has vanishing tangential trace on $\ga$.
Thereby, the generalized $\Curl$ and tangential trace are defined
as row-wise operations. 
For compatible tensor fields $\T=\na v$ with vector fields $v\in\Hoom$,
for which $\na v_{n}$, $n=1,\dots,N$, are normal at $\dom$,
the latter estimate reduces to a non-standard variant
of the well known Korn's first inequality in $\rN$
$$c\normLtom{\na v}\leq\normLtom{\sym\na v}.$$
Our proof relies on three essential tools, namely
\begin{enumerate}
\item Maxwell estimate (Poincar\'e-type estimate),
\item Helmholtz' decomposition,
\item Korn's first inequality.
\end{enumerate}
In \cite{neffpaulywitschgenkornrt} we already pointed out
the importance of the Maxwell estimate and the related question
of the Maxwell compactness property\footnote{By `Maxwell estimate' 
and `Maxwell compactness property' we mean the estimates 
and compact embedding results used in the theory of Maxwell's equations.}.
Here, we mention the papers
\cite{costabelremmaxlip,kuhndiss,picardpotential,picardcomimb,picarddeco,picardweckwitschxmas,weckmax}.
Results for the Helmholtz decomposition can be found in
\cite{friedrichsdiffformsriemann,paulydeco,picardpotential,picarddeco,weckmax,sproessighelmdecooverview,
mitreadorinamariusfinensolhodgedeco,mitreadorinamariusshawtracediffformhodgedeco,mitreamariussharphodgedecomax}.
Nowadays, differential forms find prominent applications in numerical methods like
Finite Element Exterior Calculus
\cite{arnoldfalkwintherfemec,hiptmairfemmax}
or Discrete Exterior Calculus
\cite{hiranidiss}.

\subsection{Differential Forms}

We may look at $\om$ as a smooth Riemannian manifold of dimension $N$ 
with compact closure and connected Lipschitz continuous boundary $\ga$.
The alternating differential forms of rank $q\in\{0,\dots,N\}$ on $\om$,
briefly $q$-forms, with square-integrable coefficients will be denoted by
$\Ltqom$. The exterior derivative $\ped$ 
and the co-derivative $\pcd=\pm*\ped*$ ($*$: Hodge's star operator)
are formally skew-adjoint to each other, i.e.,
$$\forall\,E\in\Ciqcom\quad H\in\Ciqpocom\qquad
\scpLtqpoom{\ped E}{H}=-\scpLtqom{E}{\pcd H},$$
where the $\Ltqom$-scalar product is given by
$$\forall\,E,H\in\Ltqom\qquad
\scpLtqom{E}{H}:=\int_{\om}E\wedge*H.$$
Here $\Ciqcom$ denotes the space of compactly supported 
and smooth $q$-forms on $\om$.
Using this duality, we can define weak versions of $\ped$ and $\pcd$.
The corresponding standard Sobolev spaces are denoted by
\begin{align*}
\Dqom&:=\set{E\in\Ltqom}{\ped E\in\Ltqpoom},\\
\Deqom&:=\set{H\in\Ltqom}{\pcd H\in\Ltqmoom}.
\end{align*}
The homogeneous tangential boundary condition $\ttrga E=0$,
where $\ttrga$ denotes the tangential trace,
is generalized in the space 
$$\Dqcom:=\ol{\Ciqcom},$$
where the closure is taken in $\Dqom$.
In classical terms, we have for smooth $q$-forms
$\ttrga=\iota^{*}$ with the canonical embedding
$\iota:\ga\hookrightarrow\ol{\om}$.
An index $0$ at the lower right position 
indicates vanishing derivatives, i.e.,
\begin{align*}
\Dqczom&=\set{E\in\Dqcom}{\ped E=0},&
\Deqzom&=\set{H\in\Deqom}{\pcd H=0}.
\end{align*} 
By definition and density, we have
$$\Deqzom:=(\ped\Dqmocom)^{\bot},\quad
\Deqzom^{\bot}:=\ol{\ped\Dqmocom},$$
where $\bot$ denotes the orthogonal complement
with respect to the $\Ltqom$-scalar product
and the closure is taken in $\Ltqom$.
Hence, we obtain the $\Ltqom$-orthogonal decomposition,
usually called Hodge-Helmholtz decomposition,
\begin{align}
\label{helmtrivial}
\Ltqom=\ol{\ped\Dqmocom}\oplus\Deqzom,
\end{align}
where $\oplus$ denotes the orthogonal sum
with respect to the $\Ltqom$-scalar product.
In \cite{weckmax,picardcomimb} the following crucial tool has been proved:

\begin{lem}
\label{maxcomp}
{\sf(Maxwell Compactness Property)}
For all $q$ the embeddings
$$\Dqcom\cap\Deqom\hookrightarrow\Ltqom$$
are compact.
\end{lem}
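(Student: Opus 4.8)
The plan is to prove sequential compactness directly. Given a sequence $(E_k)\subset\Dqcom\cap\Deqom$ bounded in the graph norm $\normLtom{\cdot}+\normLtom{\ped\,\cdot}+\normLtom{\pcd\,\cdot}$, I want to extract an $\Ltqom$-convergent subsequence. First I would invoke the Hodge--Helmholtz decomposition \eqref{helmtrivial} to write $E_k=\ped a_k+B_k$ with $\ped a_k\in\ol{\ped\Dqmocom}$ and $B_k\in\Deqzom$, so that $\pcd B_k=0$. Since the exact part is an $\Ltqom$-limit of forms with vanishing tangential trace, both summands again lie in $\Dqcom$, and by orthogonality the splitting is bounded. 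Exploiting $\ped\ped a_k=0$ and $\pcd B_k=0$ one finds $\ped E_k=\ped B_k$ and $\pcd E_k=\pcd\ped a_k$, so that each summand is controlled by \emph{one} differential operator only. Thus it suffices to prove compactness separately for the co-closed family $(B_k)\subset\Dqcom\cap\Deqzom$ with $(\ped B_k)$ bounded, and for the exact family $(\ped a_k)$ with $(\pcd\ped a_k)$ bounded.

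Next I would halve the work by duality. Hodge's star $*$ is an isometry sending $q$-forms to $(N-q)$-forms that interchanges $\ped$ and $\pcd$ and exchanges the tangential and normal boundary conditions. Hence the exact family is the $*$-image of a co-closed family carrying the complementary (normal) boundary condition, and the same potential construction below applies to it with the roles of $\ped,\pcd$ and of the two traces interchanged. It therefore remains to treat the co-closed case, say $(B_k)\subset\Dqcom\cap\Deqzom$ with $\ped B_k\in\Ltqpoom$.

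For this I would construct a \emph{regular potential}. Because $B_k$ has vanishing tangential trace, its extension by zero $\tilde B_k$ across $\ga$ to a ball $\mathsf B\supset\ol{\om}$ satisfies $\ped\tilde B_k=\widetilde{\ped B_k}$ with no spurious surface term, so $\tilde B_k$ is bounded in $L^2$ with bounded exterior derivative on the contractible set $\mathsf B$. Applying a regularised Poincar\'e homotopy operator $\mathcal R$ on $\mathsf B$ (in the sense of Costabel--McIntosh), which maps $L^2$ boundedly into $H^1$ and satisfies $\ped\mathcal R+\mathcal R\ped=\mathrm{id}$ in positive rank, yields $\tilde B_k=\ped\phi_k+\beta_k$ with $\phi_k:=\mathcal R\tilde B_k$ and $\beta_k:=\mathcal R\ped\tilde B_k$ both bounded in $H^1(\mathsf B)$. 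Restricting to $\om$, the remainder $\beta_k$ has an $\Ltqom$-convergent subsequence by Rellich--Kondrachov, and there remains the exact term $\ped\phi_k$, for which $\pcd\ped\phi_k=\pcd B_k-\pcd\beta_k=-\pcd\beta_k$ is bounded in $L^2$. Here I would, after correcting $\phi_k$ so that it carries the tangential boundary condition, use the skew-adjointness of $\ped$ and $\pcd$: for $\phi\in\Dqcom$ one has $\normLtom{\ped\phi}^2=-\scpLtqom{\phi}{\pcd\ped\phi}$ with no boundary contribution. Passing to a subsequence along which the potentials converge in $\Ltqom$ (Rellich) while $(\pcd\ped\phi_k)$ stays bounded, the identity forces $(\ped\phi_k)$ to be Cauchy in $\Ltqom$. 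Combining the two convergent pieces gives the claim; the harmonic Dirichlet fields $\Dqczom\cap\Deqzom$, on which both derivatives vanish, are absorbed as a residual and turn out, a posteriori, to be finite-dimensional.

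The main obstacle is the step on the Lipschitz boundary. On a smooth domain one could bypass the whole argument by the Gaffney inequality $\normLtom{\na E}\le c\,\bigl(\normLtom{\ped E}+\normLtom{\pcd E}+\normLtom{E}\bigr)$, which embeds $\Dqcom\cap\Deqom$ continuously into $H^1$ so that compactness follows at once from Rellich; but this $H^1$-regularity is false for a merely Lipschitz $\ga$. The genuine work is therefore (i) producing the regular potential $\phi_k$ honestly on the Lipschitz set --- via a partition of unity, local flattening of the graph boundary, and the zero-extension that is legitimate precisely because the tangential trace vanishes --- and (ii) restoring the tangential boundary condition on $\phi_k$ without destroying its $H^1$-bound, so that the integration-by-parts identity applies without a boundary term. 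Once these two technical points are secured, Rellich--Kondrachov together with the Hodge duality does the rest.
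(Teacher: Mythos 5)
First, a remark on the comparison itself: the paper does \emph{not} prove this lemma --- it imports it from the literature (\cite{weckmax,picardcomimb}), precisely because compactness of this embedding for a merely Lipschitz boundary is a deep result. So your outline must stand entirely on its own, and it does not. Its sound parts: you correctly use only the ``trivial'' decomposition \eqref{helmtrivial} (not Lemma \ref{hodgehelmdeco}, which depends on the present lemma, so no circularity), the zero extension across $\ga$ does commute with $\ped$ exactly because of the tangential condition, and the regularized Poincar\'e operator $\mathcal{R}$ on a ball $\mathsf{B}\supset\ol{\om}$ does give $\tilde{B}_k=\ped\phi_k+\beta_k$ with $\phi_k,\beta_k$ bounded in $H^1(\mathsf{B})$. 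The gap is your point (ii), and it is not a technicality to be ``secured'' --- it is the entire theorem. The potential $\phi_k=\mathcal{R}\tilde{B}_k$ satisfies \emph{no} boundary condition on $\ga$ (indeed $\ped\phi_k=-\beta_k\neq0$ on $\mathsf{B}\setminus\ol{\om}$), and without $\phi_k\in\Dqmocom$ the implication you need --- ``$(\phi_k)$ precompact in $\Ltqmoom$ and $(\pcd\ped\phi_k)$ bounded imply $(\ped\phi_k)$ precompact in $\Ltqom$'' --- is simply false. Counterexample in rank $q-1=0$ on the unit disk: $\phi_k:=k^{-1/2}r^k\cos(k\varphi)$ is harmonic, so $\pcd\ped\phi_k=0$; $\phi_k\to0$ in $\Lt(\om)$; yet $\normLtom{\ped\phi_k}^2=\pi$ for all $k$, so no subsequence of $(\ped\phi_k)$ converges, since its only possible distributional limit is $0$. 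This is the classical family (gradients of harmonic functions) showing that \emph{some} boundary condition is indispensable for Maxwell compactness; your construction has quietly dropped it at the decisive step.

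Nor can the condition be ``restored'' a posteriori: replacing $\phi_k$ by $\phi_k-\theta_k$, with $\theta_k$ matching the tangential trace of $\phi_k$ and bounded in $H^1$, changes the exact part by $\ped\theta_k$, which is merely bounded in $\Ltqom$, not precompact --- so $B_k$ is no longer split into ``exact plus precompact'', and in addition $\pcd\ped(\phi_k-\theta_k)$ is no longer controlled. The known repair replaces the tool rather than patching it: first localize by a finite partition of unity (multiplication by a cutoff $\chi$ preserves the $\Ltqom$-bounds on $E$, $\ped E$, $\pcd E$ and the tangential condition, since $\ped(\chi E)=\ped\chi\wedge E+\chi\ped E$, and compactness is a local property), so that one may assume the domain star-shaped with respect to a ball; then use, instead of $\mathcal{R}$, the \emph{support-preserving} Bogovskii-type homotopy operator of Costabel--McIntosh, which maps $L^2$-forms supported in $\ol{\om}$ boundedly to $H^1$-forms supported in $\ol{\om}$. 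Applied directly to $\tilde{E}_k$ (your preliminary Helmholtz splitting and the Hodge-star duality then become unnecessary), it yields $E_k=\ped\phi_k+\beta_k$ with $\phi_k\in H^1$ vanishing outside $\om$, hence $\phi_k\in\Dqmocom$; then your integration-by-parts identity is legitimate --- it is nothing but the definition of the weak co-derivative tested with $\phi_k-\phi_j$ --- and the Rellich argument closes. Alternatively one follows the cited arguments of Weck and Picard, or Costabel's $H^{1/2}(\om)$-regularity proof; every known route requires such genuinely boundary-adapted input, which is exactly what your blueprint is missing.
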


As a first immediate consequence, 
the spaces of so called `harmonic Dirichlet forms'
$$\harmdiq:=\Dqczom\cap\Deqzom$$
are finite dimensional.
In classical terms, a $q$-form $E$ belongs to $\harmdiq$, if
$$\ped E=0,\quad\pcd E=0,\quad\iota^{*}E=0.$$
The dimension of $\harmdiq$ equals the $(N-q)$th Betti number of $\om$.
Since we assume the boundary $\ga$ to be connected,
the $(N-1)$th Betti number of $\om$ vanishes 
and therefore there are no Dirichlet forms of rank $1$ besides zero, i.e.,
\begin{align}
\label{harmdiozero}
\harmdio=\{0\}.
\end{align}
This condition on the domain $\om$ resp. its boundary $\ga$ 
is satisfied e.g. for a ball or a torus.

By a usual indirect argument,
we achieve another immediate consequence:

\begin{lem}
\label{poincarediff}
{\sf(Poincar\'e Estimate for Differential Forms)}
For all $q$ there exist positive constants $c_{p,q}$, 
such that for all $E\in\Dqcom\cap\Deqom\cap\harmdiq^{\bot}$
$$\normLtqom{E}\leq 
c_{p,q}\big(\normLtqpoom{\ped E}^2+\normLtqmoom{\pcd E}^2\big)^{1/2}.$$
\end{lem}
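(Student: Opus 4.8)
The plan is to prove the Poincaré estimate for differential forms by the standard indirect (contradiction) argument, leaning crucially on the Maxwell Compactness Property from Lemma~\ref{maxcomp}. Suppose the asserted inequality fails. Then for every $n\in\nz$ there exists some form $E_{n}\in\Dqcom\cap\Deqom\cap\harmdiq^{\bot}$ for which the estimate is violated with constant $n$ in place of $c_{p,q}$. After normalizing so that $\normLtqom{E_{n}}=1$, the violation forces $\normLtqpoom{\ped E_{n}}^2+\normLtqmoom{\pcd E_{n}}^2<1/n$, so both $\ped E_{n}\to 0$ in $\Ltqpoom$ and $\pcd E_{n}\to 0$ in $\Ltqmoom$. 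In particular the sequence $(E_{n})$ is bounded in the graph norm of $\Dqcom\cap\Deqom$.

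Next I would invoke compactness. Since $(E_{n})$ is bounded in $\Dqcom\cap\Deqom$, Lemma~\ref{maxcomp} yields a subsequence, again denoted $(E_{n})$, converging in $\Ltqom$ to some limit $E$. Because $\ped E_{n}\to 0$ and $\pcd E_{n}\to 0$ while $E_{n}\to E$, closedness of the weak operators $\ped$ and $\pcd$ gives $\ped E=0$ and $\pcd E=0$; moreover the homogeneous tangential boundary condition is preserved in the limit, so $E\in\Dqczom$, and together with $\pcd E=0$ this places $E\in\Dqczom\cap\Deqzom=\harmdiq$. On the other hand, $\Ltqom$-convergence preserves membership in the closed subspace $\harmdiq^{\bot}$, so $E\in\harmdiq^{\bot}$ as well. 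Hence $E\in\harmdiq\cap\harmdiq^{\bot}=\{0\}$, forcing $E=0$.

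The contradiction then closes the argument: on the one hand $\normLtqom{E}=\lim_{n}\normLtqom{E_{n}}=1$ by strong $\Ltqom$-convergence of the normalized sequence, but on the other hand we have just shown $E=0$, so $\normLtqom{E}=0$. This is impossible, and therefore the claimed constant $c_{p,q}$ must exist.

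I expect the only genuinely substantive step to be the compactness extraction, which is precisely why Lemma~\ref{maxcomp} is needed: without it one could only extract a weakly convergent subsequence, and weak $\Ltqom$-convergence of the normalized sequence would not contradict $E=0$, since the norm is merely weakly lower semicontinuous. The remaining care is bookkeeping: one must verify that both boundary condition and the vanishing-derivative conditions survive passage to the limit (standard closedness of the weak differential operators and of closed subspaces under strong convergence), and that $E$ lands simultaneously in $\harmdiq$ and its orthogonal complement. These are routine once compactness is in hand, so the Maxwell compactness property is the real engine of the proof.
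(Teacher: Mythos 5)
Your proof is correct and is precisely the ``usual indirect argument'' the paper invokes (without spelling out) to deduce Lemma~\ref{poincarediff} from the Maxwell Compactness Property of Lemma~\ref{maxcomp}: normalize a violating sequence, extract an $\Ltqom$-convergent subsequence, and show the limit lies in $\harmdiq\cap\harmdiq^{\bot}=\{0\}$ while having norm one. Your bookkeeping on the closedness of the weak operators, the preservation of the boundary condition in the graph-norm limit, and the role of strong (rather than weak) convergence is exactly what makes the contradiction go through.
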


Since 
$$\ped\Dqmocom\subset\Dqczom$$ 
(note that $\ped\ped=0$ and $\pcd\pcd=0$ hold even in the weak sense) 
we get by \eqref{helmtrivial}
$$\ped\Dqmocom=\ped\big(\Dqmocom\cap\Deqmozom\big)
=\ped\big(\Dqmocom\cap\Deqmozom\cap\harmdiqmo^{\bot}\big).$$
Now, Lemma \ref{poincarediff} shows that
$\ped\Dqmocom$ is already closed.
Hence, we obtain a refinement of \eqref{helmtrivial}

\begin{lem}
\label{hodgehelmdeco}
{\sf(Hodge-Helmholtz Decomposition for Differential Forms)}
The decomposition
$$\Ltqom=\ped\Dqmocom\oplus\Deqzom$$
holds.
\end{lem}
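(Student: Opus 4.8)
The plan is to upgrade the orthogonal decomposition \eqref{helmtrivial} by showing that its first summand $\ol{\ped\Dqmocom}$ is already closed, so that the closure bar can be removed. The starting point is the inclusion $\ped\Dqmocom\subset\Dqczom$, noted in the excerpt and valid weakly because $\ped\ped=0$; in particular every exact form in the range of $\ped$ is itself closed. The strategy is therefore to represent an arbitrary element of $\ped\Dqmocom$ using a \emph{normalized} potential and then apply the Poincar\'e estimate of Lemma \ref{poincarediff} to control that potential, thereby proving closedness via a standard functional-analytic argument.

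First I would apply the decomposition \eqref{helmtrivial} one rank lower, at rank $q-1$, to the potential space $\Dqmocom$. This gives $\Dqmocom=\big(\Dqmocom\cap\Deqmozom^{\bot}\big)\oplus\big(\Dqmocom\cap\Deqmozom\big)$; since $\ped$ annihilates the closed part $\Deqmozom^{\bot}=\ol{\ped\Dqmtcom}$ (again by $\ped\ped=0$), we may replace any potential by its component in $\Dqmocom\cap\Deqmozom$ without changing its exterior derivative. Subtracting off the finite-dimensional space of harmonic Dirichlet forms $\harmdiqmo$, which also lies in the kernel of $\ped$, lets me restrict attention to potentials in $\Dqmocom\cap\Deqmozom\cap\harmdiqmo^{\bot}$. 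This yields the chain of equalities
\begin{equation*}
\ped\Dqmocom=\ped\big(\Dqmocom\cap\Deqmozom\big)
=\ped\big(\Dqmocom\cap\Deqmozom\cap\harmdiqmo^{\bot}\big),
\end{equation*}
exactly as indicated in the excerpt.

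Next I would prove closedness of $\ped\Dqmocom$. Take a sequence $E_n=\ped\Phi_n$ with normalized potentials $\Phi_n\in\Dqmocom\cap\Deqmozom\cap\harmdiqmo^{\bot}$ and suppose $E_n\to E$ in $\Ltqom$. Since each $\Phi_n$ lies in $\Deqmozom$, its co-derivative vanishes, and Lemma \ref{poincarediff} at rank $q-1$ reduces to $\normLtqmoom{\Phi_n}\leq c_{p,q-1}\normLtqom{\ped\Phi_n}$. Because $(\ped\Phi_n)=(E_n)$ is Cauchy, this estimate shows the potentials $(\Phi_n)$ are bounded and in fact Cauchy in $\Dqmocom\cap\Deqmozom$; hence $\Phi_n$ converges in that space to some $\Phi$ with $\ped\Phi=E$, so $E\in\ped\Dqmocom$. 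This closes the range and identifies $\ol{\ped\Dqmocom}=\ped\Dqmocom$, which when substituted into \eqref{helmtrivial} yields the asserted decomposition $\Ltqom=\ped\Dqmocom\oplus\Deqzom$.

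The main obstacle is the applicability of the Poincar\'e estimate to the potential: it requires the potentials to live simultaneously in $\Dqmocom$, in $\Deqmozom$, and in the orthogonal complement of the harmonic Dirichlet forms, and one must be careful that the reductions used to land in this space genuinely preserve the value of $\ped\Phi_n$. The harmonic part and the $\Deqmozom^{\bot}$ part are both killed by $\ped$ for the right reasons ($\ped\ped=0$ weakly, and $\harmdiqmo\subset\ker\ped$), so no information is lost; granting this, the Poincar\'e estimate furnishes the uniform control of the potential that converts $\Ltqom$-convergence of $(E_n)$ into $\Dqmocom$-convergence of $(\Phi_n)$. Everything else is a routine application of the closed-range argument together with the orthogonality already established in \eqref{helmtrivial}.
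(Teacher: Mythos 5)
Your proposal is correct and follows essentially the same route as the paper: reduce to normalized potentials via the chain $\ped\Dqmocom=\ped\big(\Dqmocom\cap\Deqmozom\big)=\ped\big(\Dqmocom\cap\Deqmozom\cap\harmdiqmo^{\bot}\big)$, use the Poincar\'e estimate of Lemma \ref{poincarediff} at rank $q-1$ to show that $\ped\Dqmocom$ is closed, and then drop the closure bar in \eqref{helmtrivial}. You merely spell out the Cauchy-sequence closedness argument that the paper leaves implicit, so nothing is missing.
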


\subsection{Functions and Vector Fields}

Let us turn to the special case $q=1$.
In this case, 
we choose (e.g.) the identity as single global chart for $\om$ 
and use the canonical identification isomorphism for $1$-forms 
(i.e., Riesz' representation theorem) 
with vector fields $\ped x_{n}\cong e^n$, namely
$$\sum_{n=1}^{N}v_{n}(x)\ped x_{n}\cong v(x)
=\dvec{v_{1}(x)}{\vdots}{v_{N}(x)},\quad x\in\om.$$ 
$0$-forms will be isomorphically identified with functions on $\om$.
Then, $\ped\cong\grad=\na$ for $0$-forms (functions) and
$\pcd\cong\div=\na\,\cdot\,$ for $1$-forms (vector fields).
Hence, the well known first order differential operators
from vector analysis occur.
Moreover, on $1$-forms we define a new operator
$\curl:\cong\ped$, which turns into the usual $\curl$ if $N=3$ or $N=2$.
$\Ltqom$ equals the usual Lebesgue spaces 
of square integrable functions or vector fields on $\om$ 
with values in $\rz^{n}$, $n:=n_{N,q}:=\binom{N}{q}$, 
which will be denoted by $\Ltom:=\Lt(\om,\rz^{n})$.
$\qD{0}(\om)$ and $\qDe{1}(\om)$ are identified with the
standard Sobolev spaces 
\begin{align*}
\Hgom&:=\set{u\in\Lt(\om,\rz)}{\grad u\in\Lt(\om,\rz^{N})}=\Hoom,\\
\Hdom&:=\set{v\in\Lt(\om,\rz^{N})}{\div v\in\Lt(\om,\rz)},
\end{align*}
respectively. Moreover, we may now identify $\qD{1}(\om)$ with
$$\Hcom:=\set{v\in\Lt(\om,\rz^{N})}{\curl v\in\Lt(\om,\rz^{(N-1)N/2})},$$
which is the well known $\Hcom$ for $N=2,3$.
E.g., for $N=4$ we have
$$\curl v
=\begin{bmatrix}
\pp_{1}v_{2}-\pp_{2}v_{1}\\
\pp_{1}v_{3}-\pp_{3}v_{1}\\
\pp_{1}v_{4}-\pp_{4}v_{1}\\
\pp_{2}v_{3}-\pp_{3}v_{2}\\
\pp_{2}v_{4}-\pp_{4}v_{2}\\
\pp_{3}v_{4}-\pp_{4}v_{3}
\end{bmatrix}\in\rz^6$$
and for $N=5$ we get $\curl v\in\rz^{10}$. 
In general, the entries of the $(N-1)N/2$-vector $\curl v$ consist of 
all possible combinations of 
$$\pp_{n}v_{m}-\pp_{m}v_{n},\quad1\leq n<m\leq N.$$
Similarly, we obtain the closed subspaces 
$$\Hgcom=\Hozom,\quad\Hccom$$ 
as reincarnations of 
$\qDc{0}(\om)$ and $\qDc{1}(\om)$, respectively.
We note
$$\Hgcom=\ol{\Cicom},\quad\Hccom=\ol{\Cicom},$$ 
where the closures are taken in the respective graph norms,
and that in these Sobolev spaces the classical homogeneous scalar 
and tangential (compare to $N=3$) boundary conditions
$$u|_{\ga}=0,\quad\nu\times v|_{\ga}=0$$
are generalized.
Here, $\nu$ denotes the outward unit normal for $\ga$.
Furthermore, we have the spaces of irrotational or solenoidal vector fields
\begin{align*}
\Hczom
&=\set{v\in\Hcom}{\curl v=0},\\
\Hcczom
&=\set{v\in\Hccom}{\curl v=0},\\
\Hdzom
&=\set{v\in\Hdom}{\div v=0}.
\end{align*}
Again, all these spaces are Hilbert spaces. 
Now, we have two compact embeddings
$$\Hgcom\hookrightarrow\Ltom,\quad
\Hccom\cap\Hdom\hookrightarrow\Ltom,$$
i.e., Rellich's selection theorem and the Maxwell compactness property.
Moreover, the following Poincar\'e and Maxwell estimates hold:

\begin{cor}
\label{poincare}
{\sf(Poincar\'e Estimate for Functions)}
Let $c_{p}:=c_{p,0}$. 
Then, for all functions $u\in\Hgcom$
$$\normLtom{u}\leq c_{p}\normLtom{\grad u}.$$
\end{cor}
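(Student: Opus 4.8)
The plan is to read off this estimate as the special case $q=0$ of the Poincar\'e estimate for differential forms, Lemma \ref{poincarediff}, via the identifications set up in the previous subsection. Under the canonical identification a $0$-form is just a function, the space $\qDc{0}(\om)$ becomes $\Hgcom$, and the exterior derivative $\ped$ becomes the gradient $\grad$; setting $c_{p}:=c_{p,0}$ then matches the constant appearing in the statement.

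First I would dispose of the co-derivative term on the right-hand side of Lemma \ref{poincarediff}. Since there are no $(-1)$-forms, the operator $\pcd$ acts trivially on $0$-forms, so $\qDe{0}(\om)$ is all of $\Ltom$ and the co-derivative summand $\normLtqmoom{\pcd E}$ vanishes identically. Hence at $q=0$ the right-hand side collapses to $c_{p}\normLtom{\grad u}$, as desired. Next I would verify that the harmonic Dirichlet $0$-forms are trivial, so that the orthogonality constraint in Lemma \ref{poincarediff} imposes no restriction and the estimate holds on all of $\Hgcom$. A harmonic Dirichlet $0$-form $u$ satisfies $\grad u=0$ together with the homogeneous boundary condition $u|_{\ga}=0$; since $\om$ is connected, $\grad u=0$ forces $u$ to be constant, and the vanishing trace then forces that constant to be zero. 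Thus the relevant space of harmonic forms is $\{0\}$ and its orthogonal complement is the whole of $\Ltom$.

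With these two observations the corollary follows directly from Lemma \ref{poincarediff} evaluated at $q=0$. I do not expect a genuine obstacle here: the only points needing a little care are the bookkeeping of the identifications (that $\qDc{0}(\om)=\Hgcom$ and $\ped\cong\grad$, and that the co-derivative contribution is absent) and the elementary observation that connectedness of $\om$ kills the harmonic Dirichlet $0$-forms. Were one to avoid Lemma \ref{poincarediff} altogether, the real work would instead sit in an indirect compactness argument built on Rellich's selection theorem, i.e.\ the compact embedding $\Hgcom\hookrightarrow\Ltom$, in place of the Maxwell compactness property; but invoking Lemma \ref{poincarediff} is the shorter route.
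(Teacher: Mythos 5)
Your proposal is correct and follows exactly the route the paper intends: Corollary \ref{poincare} is the case $q=0$ of Lemma \ref{poincarediff}, where the co-derivative term is absent since $\pcd$ acts trivially on $0$-forms, and the orthogonality constraint is vacuous because $\harmdiz=\{0\}$ (a locally constant function with vanishing trace is zero, which is why the paper notes this holds ``generally,'' even without connectedness). Your bookkeeping of the identifications $\qDc{0}(\om)=\Hgcom$, $\ped\cong\grad$, and $c_{p}:=c_{p,0}$ matches the paper's, so there is nothing to add.
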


\begin{cor}
\label{poincaremax}
{\sf(Maxwell Estimate for Vector Fields)}
Let $c_{m}:=c_{p,1}$. 
Then, for all vector fields $v\in\Hccom\cap\Hdom$
$$\normLtom{v}\leq 
c_{m}\big(\normLtom{\curl v}^2+\normLtom{\div v}^2\big)^{1/2}.$$
\end{cor}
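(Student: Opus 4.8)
The plan is to obtain this Maxwell estimate as a direct specialization of the Poincar\'e estimate for differential forms (Lemma \ref{poincarediff}) to rank $q=1$, translated back into the language of vector fields through the identifications fixed in this subsection. The very choice $c_{m}:=c_{p,1}$ signals that no new analysis is needed beyond Lemma \ref{poincarediff}; the real work is topological bookkeeping.

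First I would invoke the canonical identifications $\qDc{1}(\om)\cong\Hccom$ and $\qDe{1}(\om)\cong\Hdom$, under which $\ped\cong\curl$ on $1$-forms and $\pcd\cong\div$ on $1$-forms. Thus a $1$-form $E$ corresponds to a vector field $v$ with $\ped E\cong\curl v$ and $\pcd E\cong\div v$, while the $\Ltqom$-scalar product on $1$-forms becomes the usual $\Ltom$-inner product on vector fields. Reading Lemma \ref{poincarediff} at $q=1$ through these identifications yields, for all $v\in\Hccom\cap\Hdom\cap\harmdio^{\bot}$,
$$\normLtom{v}\leq c_{p,1}\big(\normLtom{\curl v}^2+\normLtom{\div v}^2\big)^{1/2}.$$

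The only discrepancy between this and the claimed corollary is the orthogonality constraint $v\in\harmdio^{\bot}$. To remove it I would appeal to \eqref{harmdiozero}, which asserts $\harmdio=\{0\}$ as a consequence of the connectedness of $\ga$ (equivalently, the vanishing of the $(N-1)$th Betti number). Since the space of harmonic Dirichlet $1$-forms is trivial, its orthogonal complement is all of $\Ltom$, so the side condition $v\in\harmdio^{\bot}$ is vacuous and the estimate holds for every $v\in\Hccom\cap\Hdom$. Setting $c_{m}:=c_{p,1}$ then completes the argument.

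I do not anticipate any genuine obstacle: the entire analytic content is already packaged in Lemma \ref{poincarediff}, and the passage from the constrained to the unconstrained estimate rests solely on the topological fact \eqref{harmdiozero}. The one point that warrants care is verifying that the identification of $1$-forms with vector fields is an isometry for the relevant inner products, so that ``orthogonal complement'' carries the same meaning on both sides and the triviality of $\harmdio$ indeed transfers to the vector-field setting.
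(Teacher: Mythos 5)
Your proposal is correct and follows exactly the paper's intended argument: Corollary \ref{poincaremax} is obtained by reading Lemma \ref{poincarediff} at $q=1$ through the vector-field identifications ($\ped\cong\curl$, $\pcd\cong\div$) and noting that the orthogonality constraint is vacuous because $\harmdio=\{0\}$ by \eqref{harmdiozero}, which is precisely why the connectedness of $\ga$ is assumed. No discrepancy with the paper's route.
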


We note that generally $\harmdiz=\{0\}$ 
and by \eqref{harmdiozero} also $\harmdio=\{0\}$.
The appropriate Helmholtz decomposition for our needs is 

\begin{cor}
\label{helmdeco}
{\sf(Helmholtz Decomposition for Vector Fiels)}
$$\Ltom=\grad\Hgcom\oplus\Hdzom$$
\end{cor}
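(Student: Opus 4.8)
The plan is to recognize the asserted identity as the vector-analytic reincarnation of the Hodge--Helmholtz decomposition of Lemma \ref{hodgehelmdeco} in the single case $q=1$, and then to carry it over through the identification isomorphisms set up above. Concretely, I would first specialize the abstract orthogonal decomposition \eqref{helmtrivial} to $q=1$. Under the identifications $\ped\cong\grad$ on $0$-forms, $\qDc{0}(\om)\cong\Hgcom$, $\pcd\cong\div$ on $1$-forms, and $\Deqzom\cong\Hdzom$ (at $q=1$), this reads $\Ltom=\ol{\grad\Hgcom}\oplus\Hdzom$. The orthogonality of the two summands is immediate from the weak definition of the divergence: for every $u\in\Hgcom$ and every $w\in\Hdzom$ one has $\int_{\om}\grad u\cdot w=-\int_{\om}u\,\div w=0$, whence $\Hdzom=(\grad\Hgcom)^{\bot}$ and the sum is automatically $\Ltom$-orthogonal.

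The one point that genuinely requires work is upgrading the closure $\ol{\grad\Hgcom}$ to $\grad\Hgcom$ itself, i.e.\ showing that the range of the gradient is already closed in $\Ltom$. This is precisely the $q=1$ content of Lemma \ref{hodgehelmdeco}, but it can equally be obtained directly from the Poincar\'e estimate of Corollary \ref{poincare}: if $\grad u_{k}\to f$ in $\Ltom$ for some sequence $u_{k}\in\Hgcom$, then $\normLtom{u_{k}-u_{l}}\leq c_{p}\normLtom{\grad(u_{k}-u_{l})}$ forces $(u_{k})$ to be Cauchy in $\Ltom$; hence $u_{k}\to u$ for some $u\in\Hgcom$ with $\grad u=f$, so $f\in\grad\Hgcom$. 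At this stage I would also record that the vanishing of the harmonic Dirichlet forms, $\harmdiz=\{0\}$ and (by \eqref{harmdiozero}) $\harmdio=\{0\}$, is what guarantees both that no extra harmonic summand appears and that the Poincar\'e estimate holds on all of $\Hgcom$ rather than only on an orthogonal complement.

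The main obstacle is therefore exactly this closedness of $\grad\Hgcom$, which is not elementary: it rests on the Poincar\'e estimate, and that estimate is in turn proved by the standard indirect compactness argument built on the Maxwell compactness property (Lemma \ref{maxcomp}) resp.\ Rellich's selection theorem. Once closedness is secured, the orthogonal decomposition $\Ltom=\grad\Hgcom\oplus\Hdzom$ follows, which is the claim; everything else is a routine transcription of Lemma \ref{hodgehelmdeco} via the $q=1$ identifications.
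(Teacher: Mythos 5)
Your proposal is correct and follows essentially the same route as the paper: Corollary \ref{helmdeco} is read off from Lemma \ref{hodgehelmdeco} (equivalently, from \eqref{helmtrivial} plus closedness of $\grad\Hgcom$ via the Poincar\'e estimate, which rests on the compactness of Lemma \ref{maxcomp}) under the $q=1$ identifications. One minor inaccuracy: the condition $\harmdio=\{0\}$ is not needed here at all, since harmonic Dirichlet fields are already contained in the summand $\Hdzom$; that condition only becomes relevant later, for identities like $\grad\Hgcom=\Hcczom$ used in Remark \ref{genkorn}.
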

 
\subsection{Tensor Fields}

We extend our calculus to $(N\times N)$-tensor (matrix) fields.
For vector fields $v$ with components in $\Hgom$
and tensor fields $\T$ with rows in $\Hcom$ resp. $\Hdom$, i.e.,
$$v=\dvec{v_{1}}{\vdots}{v_{N}},\quad 
v_{n}\in\Hgom,\quad
\T=\dvec{\trans{\T_{1}}}{\vdots}{\trans{\T_{N}}},\quad
\T_{n}\in\Hcom\text{ resp. }\Hdom$$
for $n=1,\dots,N$, we define
$$\Grad v:=\dvec{\trans{\grad}v_{1}}{\vdots}{\trans{\grad}v_{N}}=J_{v}=\na v,\quad
\Curl\T:=\dvec{\trans{\curl}\T_{1}}{\vdots}{\trans{\curl}\T_{N}},\quad
\Div\T:=\dvec{\div\T_{1}}{\vdots}{\div\T_{N}},$$ 
where $J_{v}$ denotes the Jacobian of $v$ and $\trans{}$ the transpose.
We note that $v$ and $\Div\T$ are $N$-vector fields,
$\T$ and $\Grad v$ are $(N\times N)$-tensor fields, 
whereas $\Curl\T$ is a $(N\times(N-1)N/2)$-tensor field 
which may also be viewed as a totally anti-symmetric 
third order tensor field with entries
$$(\Curl\T)_{ijk}=\pp_j\T_{ik}-\pp_k\T_{ij}.$$
The corresponding Sobolev spaces will be denoted by
\begin{align*}
&\HGom,&&\HGcom,&&\HDom,&&\HDzom,\\
&\HCom,&&\HCcom,&&\HCzom,&&\HCczom.
\end{align*}

There are three crucial tools to prove our estimate.
First, we have obvious consequences 
from Corollaries \ref{poincare}, \ref{poincaremax} and \ref{helmdeco}:

\begin{cor}
\label{poincarevec}
{\sf(Poincar\'e Estimate for Vector Fields)}
For all $v\in\HGcom$
$$\normLtom{v}\leq c_{p}\normLtom{\Grad v}.$$
\end{cor}

\begin{cor}
\label{poincaremaxten}
{\sf(Maxwell Estimate for Tensor Fields)}
The estimate
$$\normLtom{\T}\leq 
c_{m}\big(\normLtom{\Curl\T}^2+\normLtom{\Div\T}^2\big)^{1/2}$$
holds for all tensor fields $\T\in\HCcom\cap\HDom$.
\end{cor}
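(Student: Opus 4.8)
The plan is to reduce the tensor estimate to the vector-field Maxwell estimate of Corollary \ref{poincaremax} by working one row at a time. By the very definition of the tensor Sobolev spaces, a tensor field $\T$ belongs to $\HCcom\cap\HDom$ exactly when each of its rows $\T_{n}$, $n=1,\dots,N$, lies in $\Hccom\cap\Hdom$: the operators $\Curl$ and $\Div$ act row-wise, and the tangential boundary condition encoded in $\HCcom$ is imposed row-wise as well. Thus every row is an admissible vector field for Corollary \ref{poincaremax}.

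First I would apply Corollary \ref{poincaremax} to each row $\T_{n}$, obtaining
$$\normLtom{\T_{n}}^2\leq c_{m}^2\bigl(\normLtom{\curl\T_{n}}^2+\normLtom{\div\T_{n}}^2\bigr),\quad n=1,\dots,N,$$
with one and the same constant $c_{m}=c_{p,1}$ for every row. Next I would sum these $N$ inequalities over $n$. The decisive point is that all three $\Lt$-norms appearing here are simply the sums of squares of the corresponding row norms: by definition of the (Frobenius-type) norm on tensor fields we have $\normLtom{\T}^2=\sum_{n}\normLtom{\T_{n}}^2$, and likewise $\normLtom{\Curl\T}^2=\sum_{n}\normLtom{\curl\T_{n}}^2$ together with $\normLtom{\Div\T}^2=\sum_{n}\normLtom{\div\T_{n}}^2$, since $\Curl$ and $\Div$ merely stack the row-wise quantities $\curl\T_{n}$ and $\div\T_{n}$.

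Summing therefore yields
$$\normLtom{\T}^2=\sum_{n=1}^{N}\normLtom{\T_{n}}^2\leq c_{m}^2\sum_{n=1}^{N}\bigl(\normLtom{\curl\T_{n}}^2+\normLtom{\div\T_{n}}^2\bigr)=c_{m}^2\bigl(\normLtom{\Curl\T}^2+\normLtom{\Div\T}^2\bigr),$$
and taking square roots gives the claimed estimate with the very same constant $c_{m}$. There is no genuine obstacle here: the only things to check are that membership in the tensor spaces is equivalent to row-wise membership in the vector spaces and that the tensor norms split as the above sums of row norms, both of which are immediate from the definitions in the preceding subsection. In particular the constant does not deteriorate with the dimension, because the single $c_{m}$ furnished by Corollary \ref{poincaremax} serves all rows simultaneously.
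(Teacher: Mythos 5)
Your proof is correct and is precisely the argument the paper has in mind: the paper states Corollary \ref{poincaremaxten} as an ``obvious consequence'' of Corollary \ref{poincaremax}, namely the row-wise reduction you spell out, using that $\Curl$, $\Div$, the tangential boundary condition, and the squared $\Lt$-norms all split over the rows. Your observation that the constant $c_{m}$ is dimension-independent (the same constant serving every row) also matches the paper, which reuses $c_{m}$ unchanged in the tensor setting.
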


\begin{cor}
\label{helmdecoten}
{\sf(Helmholtz Decomposition for Tensor Fields)}
$$\Ltom=\Grad\HGcom\oplus\HDzom$$
\end{cor}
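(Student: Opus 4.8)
The plan is to lift the vector-field Helmholtz decomposition of Corollary \ref{helmdeco} to tensor fields by decomposing each row separately. The key structural fact I would use is that a tensor field $\T\in\Ltom$ is completely determined by its rows $\T_{1},\dots,\T_{N}\in\Lt(\om,\rz^{N})$ and that the tensor $\Ltom$-scalar product is the sum of the row-wise $\Lt(\om,\rz^{N})$-scalar products. Since the operators $\Grad$, $\Div$ and the spaces $\Grad\HGcom$, $\HDzom$ are all defined row-wise, both the existence of the decomposition and its orthogonality should reduce to the corresponding statements for vector fields.

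First I would establish existence of the splitting. Fixing $\T\in\Ltom$ with rows $\T_{n}$, I apply Corollary \ref{helmdeco} to each $\T_{n}\in\Lt(\om,\rz^{N})$ to obtain $u_{n}\in\Hgcom$ and $\sigma_{n}\in\Hdzom$ with $\T_{n}=\grad u_{n}+\sigma_{n}$. Assembling the potentials into the vector field $v:=\trans{(u_{1},\dots,u_{N})}\in\HGcom$ produces a tensor field $\Grad v$ whose rows are exactly $\trans{\grad}u_{n}$, and assembling the solenoidal parts into the tensor field $S$ with rows $\trans{\sigma_{n}}$ yields $S\in\HDzom$ since each $\sigma_{n}$ is divergence free. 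By construction $\T=\Grad v+S$, which gives $\Ltom=\Grad\HGcom+\HDzom$.

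It then remains to verify that this sum is orthogonal, and hence direct. For arbitrary $v\in\HGcom$ and $S\in\HDzom$ with rows $S_{n}\in\Hdzom$, the tensor scalar product expands as the row-wise sum
$$\sum_{n=1}^{N}\langle\grad v_{n},S_{n}\rangle_{\Lt(\om,\rz^{N})}.$$
Each summand vanishes because $\grad v_{n}\in\grad\Hgcom$ and $S_{n}\in\Hdzom$ are orthogonal by the vector-field decomposition in Corollary \ref{helmdeco}. Hence $\Grad\HGcom\perp\HDzom$, and since every factor $\Lt(\om,\rz^{N})=\grad\Hgcom\oplus\Hdzom$ is an orthogonal topological direct sum, the product structure over the $N$ rows transfers this property directly to $\Ltom$.

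I do not expect any genuine analytic obstacle here: the statement is a routine row-wise reorganization of Corollary \ref{helmdeco}, and the only point demanding care is the bookkeeping that identifies the tensor $\Ltom$ geometry as the orthogonal sum of $N$ copies of the vector $\Lt(\om,\rz^{N})$ geometry, so that closedness and orthogonality pass to the tensor level without any new argument.
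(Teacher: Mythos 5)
Your proposal is correct and matches the paper's intent exactly: the paper states Corollary \ref{helmdecoten} as an ``obvious consequence'' of the vector-field decomposition in Corollary \ref{helmdeco}, applied row-wise, which is precisely the argument you spell out. Your explicit bookkeeping (row-wise splitting, assembling the potentials into $v\in\HGcom$, and summing the row-wise orthogonality relations) is the intended, and a complete, proof.
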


The last important tool is Korn's first inequality.

\begin{lem}
\label{korn}
{\sf(Korn's First Inequality)}
For all vector fields $v\in\HGcom$
$$\normLtom{\Grad v}\leq\sqrt{2}\normLtom{\sym\Grad v}.$$
\end{lem}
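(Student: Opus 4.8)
The plan is to prove Korn's first inequality for fields in $\HGcom$ by exploiting the known Poincar\'e and Maxwell estimates together with a clean algebraic identity relating $\Curl\Grad v$ to second derivatives of $v$. The key observation is that the skew-symmetric part $\skw\Grad v = \Grad v - \sym\Grad v$ can be controlled through a differential operator whose $\Lt$-norm is, by an elementary computation, comparable to $\normLtom{\Grad v}$. Concretely, I would first note that for any smooth vector field $v$, the entries $\pp_n v_m - \pp_m v_n$ of the skew part satisfy
\begin{align*}
\pp_k(\pp_n v_m - \pp_m v_n)
=\pp_n(\pp_k v_m)-\pp_m(\pp_k v_n),
\end{align*}
so that every second derivative $\pp_k\pp_n v_m$ can be written as a combination of derivatives of symmetric-part entries and derivatives of skew-part entries. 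This is the familiar ``Korn via second derivatives'' trick, and it shows $\na(\skw\Grad v)$ is a linear combination of $\na(\sym\Grad v)$.

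Next I would set up the argument by density, working first with $v\in\Cicom$ so that all integrations by parts are justified, and then passing to the closure $\HGcom=\ol{\Cicom}$. The heart of the matter is the integration-by-parts identity
\begin{align*}
\normLtom{\Grad v}^2
=\normLtom{\div v}^2+\normLtom{\curl v}^2/?\;,
\end{align*}
or more precisely the row-wise relation, valid for compactly supported fields, that expresses $\normLtom{\Grad v}^2$ in terms of $\normLtom{\Div(\Grad v)}$-type and $\normLtom{\Curl(\Grad v)}$-type quantities. The cleanest route uses the pointwise algebraic decomposition $|\Grad v|^2=|\sym\Grad v|^2+|\skw\Grad v|^2$ together with the fact that, after integrating by parts twice on the compactly supported representatives, the cross terms between $\sym$ and $\skw$ combine into a boundary-free expression controlled by $\normLtom{\sym\Grad v}^2$.

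The step I expect to be the main obstacle is pinning down the correct constant $\sqrt{2}$ and making the integration by parts rigorous at the level of $\HGcom$ rather than merely on $\Cicom$. The factor $\sqrt{2}$ suggests the sharp bilinear estimate $\normLtom{\skw\Grad v}\leq\normLtom{\sym\Grad v}$ (so that $\normLtom{\Grad v}^2=\normLtom{\sym\Grad v}^2+\normLtom{\skw\Grad v}^2\leq 2\normLtom{\sym\Grad v}^2$), and establishing precisely this requires the computation $\normLtom{\skw\Grad v}^2=\normLtom{\sym\Grad v}^2-\normLtom{\div v}^2\leq\normLtom{\sym\Grad v}^2$ for $v\in\Cicom$, which follows by integrating $\int_\om\pp_n v_m\,\pp_m v_n$ by parts. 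I would therefore carry out that single computation carefully on smooth compactly supported fields, extract the inequality $\normLtom{\skw\Grad v}\leq\normLtom{\sym\Grad v}$, and then invoke density of $\Cicom$ in $\HGcom$ to conclude, noting that both sides are continuous with respect to the $\HGom$-norm.
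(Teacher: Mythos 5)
Your proposal is correct and its decisive final paragraph is essentially the paper's own proof: reduce by density to $v\in\Cicom$, integrate the cross term $\scpLtom{\pp_{n}v_{m}}{\pp_{m}v_{n}}$ by parts twice to turn it into $\scpLtom{\pp_{n}v_{n}}{\pp_{m}v_{m}}$, which gives $\normLtom{\sym\Grad v}^2-\normLtom{\skew{\Grad v}}^2=\normLtom{\div v}^2\geq0$, and conclude from the pointwise orthogonal splitting $\normLtom{\Grad v}^2=\normLtom{\sym\Grad v}^2+\normLtom{\skew{\Grad v}}^2\leq2\normLtom{\sym\Grad v}^2$ together with continuity of both sides in the $\HGom$-norm. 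The paper runs the identical computation in the equivalent form $2\normLtom{\sym\Grad v}^2=\normLtom{\Grad v}^2+\normLtom{\div v}^2\geq\normLtom{\Grad v}^2$; your first two paragraphs (the second-derivative identity and the unfinished display) are unnecessary detours and can simply be deleted.
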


Here, we introduce the symmetric and skew-symmetric parts 
$$\sym\T:=\frac{1}{2}(\T+\trans{\T}),\quad
\skew\T:=\frac{1}{2}(\T-\trans{\T})$$
of a $(N\times N)$-tensor $\T=\sym\T+\skew\T$. 

\begin{rem}
\label{kornsqrttwo}
We note that the proof including the value of the constant is simple.
By density we may assume $v\in\Cicom$.
Twofold partial integration yields
$$\scpLtom{\pp_{n}v_{m}}{\pp_{m}v_{n}}
=\scpLtom{\pp_{m}v_{m}}{\pp_{n}v_{n}}$$
and hence
\begin{align*}
2\normLtom{\sym\Grad v}^2
&=\frac{1}{2}\sum_{n,m=1}^N\normLtom{\pp_{n}v_{m}+\pp_{m}v_{n}}^2\\
&=\sum_{n,m=1}^N\big(\normLtom{\pp_{n}v_{m}}^2
+\scpLtom{\pp_{n}v_{m}}{\pp_{m}v_{n}}\big)\\
&=\normLtom{\Grad v}^2+\normLtom{\div v}^2
\geq\normLtom{\Grad v}^2.
\end{align*}
More on Korn's first inequality can be found, e.g., in \cite{Neff00b}.
\end{rem}

\section{Results}

For tensor fields $\T\in\HCom$ we define the semi-norm
$$\dnorm{\T}:=\big(\normLtom{\sym\T}^2+\normLtom{\Curl\T}^2\big)^{1/2}.$$
The main step is to prove the following

\begin{lem}
\label{mainlem}
Let $\hat{c}:=\max\{2,\sqrt{5}c_{m}\}$. Then, for all $\T\in\HCcom$ 
$$\normLtom{\T}\leq\hat{c}\dnorm{\T}.$$
\end{lem}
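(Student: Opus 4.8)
The plan is to combine the three tools the section has assembled: the Helmholtz decomposition, the Maxwell estimate and Korn's first inequality. First I would invoke Corollary~\ref{helmdecoten} to split an arbitrary $\T\in\HCcom$ orthogonally as $\T=\Grad v+S$ with $v\in\HGcom$ and $S\in\HDzom$, so that in particular $\normLtom{\T}^2=\normLtom{\Grad v}^2+\normLtom{S}^2$. The guiding idea is that the compatible part $\Grad v$ is controlled by Korn, while the divergence-free remainder $S$ is controlled by the Maxwell estimate; the two quantities $\sym\T$ and $\Curl\T$ then feed into exactly these two mechanisms.

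Next I would treat the solenoidal part. Since $\curl\grad=0$ row-wise, even weakly, we have $\Curl\Grad v=0$ and hence $\Curl S=\Curl\T$. Provided $S\in\HCcom$, applying Corollary~\ref{poincaremaxten} to $S\in\HCcom\cap\HDom$ with $\Div S=0$ yields $\normLtom{S}\leq c_m\normLtom{\Curl S}=c_m\normLtom{\Curl\T}$. For the compatible part I would apply Korn's inequality (Lemma~\ref{korn}) to $v$, giving $\normLtom{\Grad v}\leq\sqrt{2}\normLtom{\sym\Grad v}$, then rewrite $\sym\Grad v=\sym\T-\sym S$ and use the triangle inequality together with $\normLtom{\sym S}\leq\normLtom{S}$ to arrive at $\normLtom{\Grad v}\leq\sqrt{2}\big(\normLtom{\sym\T}+c_m\normLtom{\Curl\T}\big)$.

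Finally I would assemble the pieces. Writing $a:=\normLtom{\sym\T}$ and $b:=\normLtom{\Curl\T}$, the orthogonality of the decomposition gives
$$\normLtom{\T}^2=\normLtom{\Grad v}^2+\normLtom{S}^2\leq 2(a+c_mb)^2+c_m^2b^2=2a^2+4ac_mb+3c_m^2b^2.$$
A single application of Young's inequality in the form $4ac_mb\leq 2a^2+2c_m^2b^2$ then collapses this to $4a^2+5c_m^2b^2\leq\max\{4,5c_m^2\}(a^2+b^2)=\hat{c}^2\dnorm{\T}^2$, which is precisely the claimed bound with $\hat{c}=\max\{2,\sqrt{5}c_m\}$; the specific constant is exactly what this Young splitting produces.

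I expect the main obstacle to be the step I deferred above, namely verifying that the solenoidal remainder $S$ actually lies in $\HCcom$ rather than merely in $\HCom$, so that the Maxwell estimate of Corollary~\ref{poincaremaxten}, which requires the homogeneous tangential trace, is applicable. This reduces to checking that the gradient part inherits the boundary condition, i.e. $\Grad\HGcom\subset\HCcom$: for $v\in\HGcom$ one approximates each component in the $\Hgom$-norm by functions in $\Cicom$, whose gradients lie in $\Hccom$ with vanishing $\curl$, and passes to the limit in the graph norm of $\Hcom$. Granting this compatibility of the Helmholtz decomposition with the tangential trace space, the remaining estimates are routine.
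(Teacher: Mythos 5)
Your proof is correct and takes essentially the same route as the paper: the Helmholtz decomposition of Corollary~\ref{helmdecoten}, the Maxwell estimate of Corollary~\ref{poincaremaxten} applied to the solenoidal part (which lies in $\HCcom$ exactly because of the inclusion $\Grad\HGcom\subset\HCczom$, the paper's \eqref{gradsubsetcurl}, which you correctly identify as the one delicate point and prove by density), and Korn's inequality (Lemma~\ref{korn}) on the gradient part, arriving at the identical constant $\hat{c}=\max\{2,\sqrt{5}c_{m}\}$. Your only deviation is cosmetic: you substitute the Maxwell bound early and use Young's inequality, whereas the paper keeps $\normLtom{\TS}$ symbolic and uses $(x+y)^2\leq2x^2+2y^2$ to reach the same bound $4\normLtom{\sym\T}^2+5\normLtom{\TS}^2$.
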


\begin{proof}
Let $\T\in\HCcom$.
According to Corollary \ref{helmdecoten} we orthogonally decompose 
$$\T=\Grad v+\TS\in\Grad\HGcom\oplus\HDzom.$$
Then, $\Curl\T=\Curl\TS$ and we observe $\TS\in\HCcom\cap\HDzom$ since 
\begin{align}
\label{gradsubsetcurl}
\Grad\HGcom\subset\HCczom.
\end{align}
By Corollary \ref{poincaremaxten}, we have
\begin{align}
\label{estpsi}
\normLtom{\TS}\leq c_{m}\normLtom{\Curl\T}.
\end{align}
Then, by Lemma \ref{korn} and \eqref{estpsi} we obtain
\begin{align*}
\normLtom{\T}^2
&=\normLtom{\Grad v}^2+\normLtom{\TS}^2\\
&\leq2\normLtom{\sym\Grad v}^2+\normLtom{\TS}^2
\leq4\normLtom{\sym\T}^2+5\normLtom{\TS}^2,
\end{align*}
which completes the proof.
\end{proof}

The immediate consequence is our main result

\begin{theo}
\label{maintheo}
On $\HCcom$ the norms $\norm{\,\cdot\,}_{\HCom}$
and $\dnorm{\,\cdot\,}$ are equivalent.
In particular, $\dnorm{\,\cdot\,}$ is a norm on $\HCcom$
and there exists a positive constant $c$, such that
$$c\norm{\T}_{\HCom}^2\leq\dnorm{\T}^2
=\normLtom{\sym\T}^2+\normLtom{\Curl\T}^2$$
holds for all $\T\in\HCcom$.
\end{theo}

\begin{rem}
\label{maintheorem}
For a skew-symmetric tensor field $\T:\om\to\soN$ 
our estimate reduces to a Poincar\'e inequality in disguise, 
since $\Curl\T$ controls all partial derivatives of $\T$
(compare to \cite{Neff_curl06}) 
and the homogeneous tangential boundary condition 
for $\T$ is implied by $\T|_{\ga}=0$.
\end{rem}

Setting $\T:=\Grad v$ we obtain

\begin{rem}
\label{genkorn}
{\sf(Korn's First Inequality: Tangential-Variant)}
For all $v\in\HGcom$ 
\begin{align}
\label{kornineq}
\normLtom{\Grad v}\leq\hat{c}\normLtom{\sym\Grad v}
\end{align}
holds by Lemma \ref{mainlem} and \eqref{gradsubsetcurl}.
This is just Korn's first inequality from Lemma \ref{korn}
with a larger constant $\hat{c}$.
Since $\ga$ is connected, i.e., $\harmdio=\{0\}$, we even have
$$\Grad\HGcom=\HCczom.$$ 
Thus, \eqref{kornineq} holds for all
$v\in\HGom$ with $\Grad v\in\HCczom$, i.e.,
with $\Grad v_{n}$, $n=1,\dots,N$, normal at $\ga$,
which then extends Lemma \ref{korn} 
through the (apparently) weaker boundary condition.
\end{rem}

The elementary arguments above apply certainly 
to much more general situations, e.g.,
to not necessarily connected boundaries $\ga$
and to tangential boundary conditions
which are imposed only on parts of $\ga$.
These discussions are left to forthcoming papers.

\begin{acknow}
We thank the referee for pointing out a
missing assumption in a preliminary version of the paper.
\end{acknow}

\bibliographystyle{plain} 
\bibliography{/Users/paule/Library/texmf/tex/TeXinput/bibtex/paule,/Users/paule/Library/texmf/tex/TeXinput/bibtex/literatur1}

\vspace*{1cm}
\begin{tabular}{l}
\sf Patrizio Neff, Dirk Pauly, Karl-Josef Witsch\\
\\
Universit\"at Duisburg-Essen\\
Fakult\"at f\"ur Mathematik\\
Campus Essen\\
Universit\"atsstr. 2\\
45117 Essen\\
Germany\\
\\
\tt patrizio.neff@uni-due.de\\
\tt dirk.pauly@uni-due.de\\
\tt kj.witsch@uni-due.de
\end{tabular}

\end{document}